\DeclareMathOperator{\Ima}{Im}
\newtheorem{theorem}{Theorem}[section]
\newtheorem{lemma}[theorem]{Lemma}
\newtheorem{proposition}[theorem]{Proposition}
\newtheorem{corollary}[theorem]{Corollary}
\newtheorem{prevtheorem}{Theorem}
\theoremstyle{definition}
\newtheorem{definition}[theorem]{Definition}
\theoremstyle{remark}
\newtheorem{remark}[theorem]{Remark}
\title{On the symmetry of images of word maps in groups}
\author{William Cocke \and Meng-Che Ho}
\date{\today}
\begin{document}

\begin{abstract}
Word maps in a group, an analogue of polynomials in groups, are defined by substitution of formal words. In \cite{Lubotzky}, Lubotzky gave a characterization of the images of word maps in finite simple groups, and a consequence of his characterization is the existence of a group $G$ such that the image of some word map on $G$ is not closed under inversion. We explore sufficient conditions on a group that ensure that the image of all word maps on $G$ are closed under inversion. We then show that there are only two groups with order less than $108$ with the property that there is a word map with image not closed under inversion. We also study this behavior in nilpotent groups.

\end{abstract}

\maketitle

\section{Introduction}
In this paper we consider subsets of a group that arise as the realizations of a word. The most famous example of such a subset is the set of commutators $[x,y]$, the realizations of the word $x^{-1} y^{-1} x y$, within a group. It is a basic property of commutators that $[x,y]^{-1}=[y,x]$, and hence the set of commutators in any group $G$ is closed under inverses in $G$. We investigate groups and words whose corresponding sets are not closed under inversion. 

In Dan Segal's \emph{Words} \cite{Words} a word $w$ is an expression of the form 
\[
w(x_1,\dots,x_k)=\prod_{j=1}^{s} x_{i_j}^{e_j},
\] where $i_1,\dots,i_j \in \{1,\dots,k\}$ and each $e_j$ is $\pm1$. For any group $G$, let $G^{(k)}$ be the direct product of $k$ copies of $G$, then we have the verbal mapping:
\[
w:G^{(k)}\rightarrow G, 
(g_1,\dots,g_k)\mapsto w(g_1,\dots,g_k)=\prod_{j=1}^{s} g_{i_j}^{e_j}.
\] 

We can also think of $w$ as an element of the free group on the symbols $(x_1,\dots,x_k)$. The map $w:G^{(k)}\rightarrow G$ is evaluation of $w$ using a $k$-tuple of elements of $G$. We write $G_w$ for the image of the map $w$ in $G$. Note, some authors including Segal \cite{Words}, set $G_w=w(G^{(k)})^{\pm 1}$ where $w$ is a word in $k$ variables. For an integer $s$, we define $G_w^s$ to be the set of elements of $G_w$ taken to the $s$-power. We will use the analogous notation $G^s$. We \textbf{do not} assume that $G_w$ is closed under inverses and instead we investigate the following property:

\begin{definition}
A pair $(G,w)$, where $G$ is a group and $w$ is a word, is called \emph{chiral} if $G_w\neq G_{w^{-1}}$. Equivalently, the pair $(G,w)$ is chiral if the set $G_w^{-1}$, the inverses of elements of $G_w$, does not equal $G_w$. We say $G$ is \emph{chiral} if $(G,w)$ is chiral for some $w$. Otherwise $G$ is \emph{achiral}. We say $x\in G$ witnesses the chirality of $G$ if $x\in G_w$ and $x^{-1} \notin G_w$ for some $w$.
\end{definition}

The existence of chiral groups can be shown from a result of Lubotzky \cite{Lubotzky}: In a finite simple group $G$ the images of word maps are exactly the subsets of $G$ closed under automorphisms and containing the identity. Consider $G=M_{11}$, the Mathieu group of order 7920. $G$ is chiral since an element of order 11 is not conjugate to its inverse and $\text{Out}(G)$ is trivial. However, we do not currently know of any word $w$ such that $(G,w)$ is chiral. 

In this paper, we begin the process of classifying all finite chiral groups. We prove: 
\begin{prevtheorem} \label{Listing}
The only chiral groups with order less than 108 are SmallGroups $(63,1)$ and $(80,3)$. \end{prevtheorem}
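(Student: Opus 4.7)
The plan is to combine general sufficient conditions for achirality with a case analysis over the groups of small order. As a first step, the paper's achirality criteria are used to sieve out most candidates. The most basic observation is that for any word $w$, the image $G_w$ is invariant under $\text{Aut}(G)$, since $\phi(w(g_1,\ldots,g_k)) = w(\phi(g_1),\ldots,\phi(g_k))$. Hence any group in which every element is $\text{Aut}(G)$-equivalent to its inverse is achiral; this includes all ambivalent groups. Abelian groups are achiral because $G_w$ is then a subgroup of $G$. Using GAP's SmallGroups library, I would eliminate all such groups from the list of those of order less than $108$, leaving a short list of candidates.

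For each remaining group $G$, I would identify the $\text{Aut}(G)$-orbits not fixed setwise by inversion; call these the \emph{asymmetric orbits}. If there are none, $G$ is achiral. Otherwise, one must either produce a word whose image contains an asymmetric orbit but not its inverse, or show that no such word exists. For $\text{SmallGroup}(63,1) \cong C_7 \rtimes C_9$, the centre is the $C_3 \le C_9$ that acts trivially on $C_7$, and one checks that every automorphism of $G$ fixes this $C_3$ pointwise; hence elements such as $xy^3$ (with $x$ of order $7$ and $y$ of order $9$) are not $\text{Aut}(G)$-equivalent to their inverses $xy^6$. A short word should then realize this asymmetry, and the verification that $xy^3 \in G_w$ while $xy^6 \notin G_w$ reduces to a finite enumeration over $G^{(k)}$. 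A parallel construction should handle $\text{SmallGroup}(80,3)$.

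The principal obstacle is the other direction: proving achirality for every remaining candidate in which asymmetric orbits exist but no word is allowed to distinguish them from their inverses. Since the set of all words is infinite, one cannot brute-force search. Instead, for each such $G$ the strategy is to show that the collection of subsets actually realizable as $G_w$ forms a restricted family whose every member is inversion-closed, even though $\text{Aut}(G)$-invariance alone would allow non-closed sets. In practice this reduces to checking a finite list of short test words, together with structural constraints coming from exponent-sum conditions, images under quotient maps to smaller factors, and interactions with the verbal subgroup. Completing this group-by-group verification finishes the classification.
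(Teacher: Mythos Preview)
Your first move---sieving out groups in which every element is automorphic to its inverse---matches the paper exactly, and the paper reports that this leaves 44 groups of order below 108. The divergence is in what comes next. For the achirality direction you propose to show, case by case, that the realizable images $G_w$ form a restricted family, ``in practice \dots checking a finite list of short test words.'' That is the gap: you give no bound on how short is short enough, nor any reason finitely many words suffice. The paper does not enumerate words at all for this step. Instead it applies a second, sharper sieve (Corollary~\ref{G^kG'}): any witness to chirality for a word of shape $x^kc$ with $c\in F_n'$ must lie in $G^kG'\setminus G^k$, so if every such element (for each $k$ dividing the exponent) is already automorphic to its inverse, $G$ is achiral. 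A Magma check of this criterion cuts the 44 candidates down to three: $(63,1)$, $(80,3)$, and $(81,10)$. The last is handled separately by showing that the free nilpotent group of class~3 and exponent~9 on two generators is achiral (a finite search over Mal'cev coordinates), and $(81,10)$ is a quotient of it.

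For the chirality direction your proposal is also incomplete: you assert that ``a short word should then realize this asymmetry'' for $(63,1)$ and $(80,3)$, but produce none. The paper supplies an explicit word via Lemma~\ref{family}: both groups sit in the family $C_q\rtimes C_{pr}$ (with $p\mid r$, the $C_p$-action by multiplication by $\phi$, and $\phi\pm 1$ coprime to $q$), taking $(q,p,r,\phi)=(7,3,3,2)$ and $(5,4,4,2)$ respectively, and for every such group the word $w=a^p[a,b][a^{-1},b]^{\phi}$ is shown by direct computation to have image not closed under inversion. Without this construction (or an equivalent explicit witness), the chirality half of the theorem is unproved.
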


Section \ref{prop-of-chi} demonstrates how structural information about a group can force achirality of the group; for example finite Frobenius groups with abelian kernel and achiral complement are achiral. 

In section \ref{comp-of-chi} we recall an algorithm of Neumann \cite{Neumann}, for constructing all word maps on a finite group with a given number of variables. We prove the following theorem:

\begin{prevtheorem} \label{D-variables}
If a group $G$ is generated by $d$ elements, then $G$ is chiral if and only if there a word $w$ on $d$ variables such that $(G,w)$ is chiral. 
\end{prevtheorem}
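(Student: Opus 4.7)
The forward direction is immediate from the definition of chirality, so I focus on the converse. My plan is a direct substitution: starting from any chiral word $w$ on $k$ variables, I will produce a chiral word on $d$ variables by plugging words in a fixed generating set of $G$ into the variables of $w$.

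Suppose $(G,w)$ is chiral with $w$ a word on $k$ variables. Then there exists some $x \in G_w$ with $x^{-1} \notin G_w$, and we may write $x = w(g_1, \ldots, g_k)$ for some tuple $(g_1, \ldots, g_k) \in G^{(k)}$. Fix a generating tuple $(h_1, \ldots, h_d)$ of $G$; since the $h_j$ generate $G$, for each $i$ I can choose a word $u_i(y_1, \ldots, y_d)$ such that $u_i(h_1, \ldots, h_d) = g_i$. Define the composite word on $d$ variables
\[
w'(y_1, \ldots, y_d) \;:=\; w\bigl(u_1(y_1, \ldots, y_d),\, \ldots,\, u_k(y_1, \ldots, y_d)\bigr).
\]

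The two facts to check are: (i) $w'(h_1, \ldots, h_d) = w(g_1, \ldots, g_k) = x$, so $x \in G_{w'}$; and (ii) for every $(h_1', \ldots, h_d') \in G^{(d)}$, the value $w'(h_1', \ldots, h_d')$ is $w$ evaluated at the $k$-tuple $(u_1(\vec h\,'), \ldots, u_k(\vec h\,')) \in G^{(k)}$, hence lies in $G_w$. Together these give $x \in G_{w'} \subseteq G_w$, so since $x^{-1} \notin G_w$ we conclude $x^{-1} \notin G_{w'}$, witnessing that $(G,w')$ is chiral.

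I do not anticipate any real obstacle; the construction is purely formal, and the key conceptual observation is that chirality is certified by a \emph{single} element whose inverse is absent from the image. Consequently we only need to preserve the inclusion $G_{w'} \subseteq G_w$, not reconstruct the entire image $G_w$ from a single $d$-variable word---which is what makes a crude substitution sufficient.
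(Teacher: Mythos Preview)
Your proof is correct and follows essentially the same substitution idea as the paper's Lemma~\ref{locality of chirality}: replace the $k$ variables of the chiral word by $d$-variable words realizing the witnessing tuple, and use the containment $G_{w'}\subseteq G_w$ to transfer the witness. Your presentation is in fact slightly cleaner, since you go directly to the single substitution needed for the witness rather than first enumerating all of $G$ by words.
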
 

Hence the chirality of a finite group is recursive. In section \ref{inf-family-of-chi} we give an explicit infinite family of pairs of finite groups and words that have no nontrivial chiral quotient groups. Interestingly both chiral groups in Theorem \ref{Listing} are in this infinite family. In section \ref{nilp} we turn our attention to nilpotent groups. We show:

\begin{prevtheorem}
The free nilpotent groups of class $\geq 3$ are chiral and the free nilpotent groups of rank 3 and class 2 are achiral.
\end{prevtheorem}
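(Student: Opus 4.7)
The theorem has two halves, which I would handle separately.

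For chirality of $F_{\mathrm{nil}}(r,c)$ with $c \geq 3$ and $r \geq 2$: chirality lifts under quotients, since a lift of a witnessing tuple $(h_i)$ in $H$ to $G$ produces some $g = w(g_1,\ldots,g_k) \in G_w$ mapping to $h$, and $g^{-1} \in G_w$ would force $h^{-1} \in H_w$ by applying the quotient map. Hence it suffices to find a chiral finite nilpotent group of class $3$, since $F_{\mathrm{nil}}(r,c)$ surjects onto every finite nilpotent group of class $\leq c$ on $\leq r$ generators. My plan would be to exhibit such a group explicitly: a $p$-group of class $3$ on two generators together with a chiral word involving the weight-$3$ basic commutators $[[x,y],x]$ or $[[x,y],y]$, possibly combined with powers of $x$ and $y$. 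Verifying chirality in the finite quotient can be done by computing the image of $w$ directly via the Hall--Petresco collection formula, or algorithmically as in Section~\ref{comp-of-chi}.

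For achirality of $F = F_{\mathrm{nil}}(3,2)$: the structure $F/[F,F] \cong \mathbb{Z}^3$ and $[F,F] \cong \Lambda^2 \mathbb{Z}^3 \cong \mathbb{Z}^3$ is crucial, since rank~$3$ is the unique rank where the abelianization and the commutator subgroup have the same rank as free abelian groups. Using a section $\sigma : \mathbb{Z}^3 \to F$, write each element of $F$ as a pair $(a,c) \in \mathbb{Z}^3 \times \mathbb{Z}^3$ with product determined by the cocycle $\beta(a,a') = \sigma(a+a')^{-1}\sigma(a)\sigma(a')$. For a word $w$ with total exponents $n_i$, one computes
\[
w(g_1,\ldots,g_k) = \Bigl(\sum n_i a_i,\ \sum n_i c_i + R_w(a_1,\ldots,a_k)\Bigr),
\]
where $R_w$ is a specific $\Lambda^2 \mathbb{Z}^3$-valued function quadratic in the $a_i$'s. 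Given $g \in F_w$, I would use that $F_w$ is $\mathrm{Aut}(F)$-invariant to obtain $\iota(g) \in F_w$, where $\iota$ is the generator-inverting automorphism. A direct computation yields $\iota(g) = g^{-1} \zeta$ for an explicit $\zeta \in [F,F]$ depending quadratically on the abelianization of $g$. To realize $g^{-1}$ as a word value, one modifies the tuple $(\iota(g_i))$ both by elements of $[F,F]$ (which shift the commutator part by $\sum n_i [F,F] = d[F,F]$ where $d = \gcd(n_i)$) and by shifts in the fiber of $(a_i) \mapsto \sum n_i a_i$. The quadratic variation of $R_w$ combined with the rank-$3$ identification $\Lambda^2 \mathbb{Z}^3 \cong \mathbb{Z}^3$ provides enough freedom to absorb $\zeta$ modulo $d[F,F]$.

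The hard part is the achirality direction, where the argument must be carried out uniformly in $w$. The chirality direction reduces to a finite check on an explicit $p$-group. The delicate step in the achirality proof is showing that the commutator-part modifications, parametrized by the fiber $\{(b_i) : \sum n_i b_i = -\sum n_i a_i\}$ mapping into $[F,F]/d[F,F]$ via $R_w$, together with the cocycle contribution, cover the required correction $\zeta$. This is precisely the point where the equality of the ranks of $F/[F,F]$ and $[F,F]$ enters decisively: in rank $2$ the commutator subgroup is only $\mathbb{Z}$, while in rank $\geq 4$ the cocycle and $R_w$ no longer match cleanly.
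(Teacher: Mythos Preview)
Your proposal misses the one lemma that drives both halves of the paper's argument: since a free nilpotent group is a reduced free group, it is achiral if and only if every element is homomorphic to its inverse (Lemma~\ref{homomorphic lemma}). This converts both questions into statements about single elements rather than about all word maps at once, and it is what makes the paper's proofs short.

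For achirality of $N_{2,3}$, the paper does not work word-by-word as you propose. After reducing by automorphisms to elements of the form $g=a^{i}d^{j}e^{k}f^{l}$ (with $d=[a,b]$, $e=[a,c]$, $f=[b,c]$), it looks for a single endomorphism $\phi$ with $\phi(g)=g^{-1}$. Taking $\phi(a)=a^{-1}$ and encoding $\phi(b),\phi(c)$ by a $2\times 2$ integer matrix $M$, the condition $\phi(g)=g^{-1}$ becomes exactly: $\det M=-1$ and $(j,k)^{T}$ is an eigenvector of $M$ with eigenvalue $1$; such an $M$ always exists. Your scheme of adjusting input tuples inside fibers and tracking a quadratic term $R_w$ may be salvageable, but the step you yourself call ``delicate''---that the fiber variation of $R_w$ covers the required correction modulo $d[F,F]$---is not established, and the case where all exponent sums $n_i$ vanish (so $d=0$ and central shifts contribute nothing) is not even addressed.

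For chirality in class $\geq 3$, the paper again avoids any finite search. It exhibits, for each odd prime $p$, the element $g=a^{p^{2}}[a,b]^{p}[[a,b],a]$ in $N_{3,2}$ and shows by a short congruence argument on Mal'cev exponents that no endomorphism sends $g$ to $g^{-1}$; by Lemma~\ref{homomorphic lemma} this gives chirality of $N_{3,2}$, and the quotient-lifting you state correctly then handles all larger class and rank. Your plan to exhibit a finite chiral class-$3$ $p$-group is sound in principle, but you have not produced one, and ``verify algorithmically'' is a promise, not a proof.
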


\section{Properties of Chirality} \label{prop-of-chi}

Most of our results about achirality come from the following line of reasoning: a group $G$ is chiral if and only if there is a word $w$ such that $(G,w)$ is chiral. $(G,w)$ is chiral if and only if some element $x\in G$ that witnesses the chirality of $(G,w)$. If no element $x$ can be a witness of the chirality of $(G,w)$ for any $w$, then $G$ is achiral.  

For a group $G$, and $x,y\in G$ we say $x$ is \emph{automorphic} to $y$ if there is an automorphism $\sigma$ of $G$ such that $\sigma(x)=y$. We likewise will say that $x$ is homomorphic to $y$ if there is a homomorphism $\phi$ from $G$ to $G$ such that $\phi(x)=y$. Clearly, an element $x\in G$ cannot be a witness to chirality if $x$ is homomorphic to $x^{-1}$. This gives us the following simple observation:
\begin{lemma}\label{autmorphic to inverse}
Let $G$ be a group with the property that for every $x\in G$ there is a $\phi$, a homomorphism of $G$ (dependent on $x$), such that $\phi(x)=x^{-1}$. Then $G$ is achiral. 
\end{lemma}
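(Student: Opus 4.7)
The plan is to argue contrapositively: assume $G$ is chiral and produce an $x \in G$ that fails the hypothesis, i.e., is not homomorphic to its inverse.

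First I would record the key structural fact that word-image sets are stable under endomorphisms: for any homomorphism $\phi: G \to G$ and any word $w$, we have $\phi(G_w) \subseteq G_w$. This is immediate from the definition, since for $g_1, \ldots, g_k \in G$,
\[
\phi\bigl(w(g_1,\ldots,g_k)\bigr) \;=\; \phi\!\left(\prod_{j=1}^{s} g_{i_j}^{e_j}\right) \;=\; \prod_{j=1}^{s} \phi(g_{i_j})^{e_j} \;=\; w\bigl(\phi(g_1),\ldots,\phi(g_k)\bigr),
\]
which lies in $G_w$.

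Now suppose for contradiction that $G$ is chiral. Then by the discussion preceding the lemma, there exist a word $w$ and an element $x \in G$ that witnesses the chirality of $(G,w)$; that is, $x \in G_w$ but $x^{-1} \notin G_w$. By hypothesis, there is a homomorphism $\phi$ of $G$ with $\phi(x) = x^{-1}$. Applying the stability fact to $x \in G_w$ gives $x^{-1} = \phi(x) \in \phi(G_w) \subseteq G_w$, contradicting the choice of $x$.

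I do not expect any genuine obstacle here; the argument is essentially just the observation that endomorphisms preserve verbal image sets, combined with the hypothesis applied to a putative witness. The only thing worth being careful about is that the homomorphism $\phi$ is allowed to depend on $x$, but since we only need one $\phi$ for the single witness $x$, this causes no trouble.
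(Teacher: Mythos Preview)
Your proof is correct and matches the paper's approach exactly. The paper does not give a separate proof environment for this lemma; it treats it as an immediate consequence of the preceding sentence (``an element $x\in G$ cannot be a witness to chirality if $x$ is homomorphic to $x^{-1}$''), and your argument is precisely a fleshed-out version of that observation, including the verification that endomorphisms preserve $G_w$.
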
 

The following lemmas will be useful in developing a computational test for chirality. The first lemma shows that $G_w$ depends only one equivalence class of $w$ via automorphisms in a free group containing $w$. This lemma was first observed via examples using the automated proof software Prover9 \cite{Prover9}. Using Prover9 the authors showed that the images of maps associated with the 2-Engel word $[x,y,y]$ and 3-Engel word $[x,y,y,y]$ are closed under inversion in all groups. For $n$ greater than 3, we do not know whether the image of the $n$-Engel word is closed under inversion for all groups. 

\begin{lemma}
Let $F_n$ be the free group on the symbols $x_1,\dots,x_n$ and let $w\in F_n$ be a word. Let $\sigma \in \text{Aut}(F_n)$ and $u=\sigma(w)$. Then for any group $G$, $G_w = G_u$. 
\end{lemma}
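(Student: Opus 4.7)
The plan is to reinterpret the image $G_w$ using the universal property of the free group. Fix $w \in F_n$. For any $n$-tuple $(g_1,\dots,g_n) \in G^{(n)}$ there is a unique homomorphism $\phi : F_n \to G$ sending $x_i \mapsto g_i$, and under this correspondence $w(g_1,\dots,g_n) = \phi(w)$. Hence I would first record the identification
\[
G_w = \{\phi(w) : \phi \in \mathrm{Hom}(F_n,G)\},
\]
and likewise for $G_u$. This turns the lemma into a statement purely about $\mathrm{Hom}(F_n,G)$ acting on a single element of $F_n$.

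Next, since $u = \sigma(w)$, for every $\phi \in \mathrm{Hom}(F_n,G)$ we have $\phi(u) = \phi(\sigma(w)) = (\phi \circ \sigma)(w)$. So
\[
G_u = \{(\phi \circ \sigma)(w) : \phi \in \mathrm{Hom}(F_n,G)\}.
\]
The key observation is that precomposition with $\sigma$ gives a bijection $\mathrm{Hom}(F_n,G) \to \mathrm{Hom}(F_n,G)$, $\phi \mapsto \phi \circ \sigma$, with inverse $\psi \mapsto \psi \circ \sigma^{-1}$ (this uses only that $\sigma$ is an automorphism of $F_n$, so both $\sigma$ and $\sigma^{-1}$ exist and compose correctly). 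Therefore the set $\{\phi \circ \sigma : \phi \in \mathrm{Hom}(F_n,G)\}$ is all of $\mathrm{Hom}(F_n,G)$, and applying the evaluation-at-$w$ map to both sides yields $G_u = G_w$.

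There is essentially no obstacle here beyond setting up the correct vantage point: the whole argument is the universal property of $F_n$ plus the trivial fact that an automorphism of $F_n$ induces a bijection on $\mathrm{Hom}(F_n,G)$. The only thing worth emphasizing in the write-up is that while a single tuple $(g_1,\dots,g_n)$ generally gives different values for $w$ and $u$, as the tuple varies over all of $G^{(n)}$ the automorphism $\sigma$ merely permutes the tuples, leaving the image set unchanged.
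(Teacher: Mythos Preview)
Your argument is correct and is in fact cleaner than the paper's. The paper's proof consists of the single sentence ``The elementary Nielsen transformations do not change the image of a word map in a group,'' which implicitly invokes Nielsen's theorem that $\mathrm{Aut}(F_n)$ is generated by the elementary Nielsen transformations, together with the (easy but unstated) check that each such generator merely permutes, inverts, or multiplies the input coordinates and hence leaves $G_w$ unchanged. Your approach sidesteps Nielsen's theorem entirely: by rewriting $G_w$ as $\{\phi(w):\phi\in\mathrm{Hom}(F_n,G)\}$ via the universal property, the lemma reduces to the triviality that $\phi\mapsto\phi\circ\sigma$ is a bijection of $\mathrm{Hom}(F_n,G)$. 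This is both shorter and more general, since it applies immediately to any automorphism without first decomposing it into generators; the paper's route, on the other hand, is closer in spirit to the concrete manipulations used later (e.g.\ running Euclid's algorithm on the weight tuple).
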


\begin{proof}
The elementary Nielsen transformations do not change the image of a word map in a group.  
\end{proof}

We will use Nielsen transformations to show that for a group $G$ and a word $w$ $(G,w)$ is chiral if and only if $(G,v)$ is chiral for a word $v$ of specified form:

\begin{lemma}
Let $F_n$ be the free group on the symbols $x_1,\dots,x_n$ and let $w\in F_n$ be a word. There is an automorphism of $F_n$ taking $w$ to a word $v$ of the form $x_1^{a} c$, where $c$ is in $F_n'$. 
\end{lemma}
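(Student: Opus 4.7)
The plan is to pass to the abelianization $F_n^{\mathrm{ab}} = F_n/F_n' \cong \mathbb{Z}^n$ and reduce the problem to a standard statement about $GL_n(\mathbb{Z})$ acting on row vectors. Let $\pi \colon F_n \to \mathbb{Z}^n$ be the abelianization map and write $\pi(w) = (a_1, \ldots, a_n)$, where $a_i$ is the exponent sum of $x_i$ in $w$. A word $v \in F_n$ has the form $x_1^a c$ with $c \in F_n'$ if and only if $\pi(v) = (a, 0, \ldots, 0)$. So the statement reduces to finding an automorphism $\sigma$ of $F_n$ whose induced action on $\mathbb{Z}^n$ sends $(a_1, \ldots, a_n)$ to a vector of the form $(a, 0, \ldots, 0)$.

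First I would invoke the classical fact that the natural map $\mathrm{Aut}(F_n) \to GL_n(\mathbb{Z})$ is surjective; equivalently, every elementary row operation on $\mathbb{Z}^n$ is induced by an elementary Nielsen transformation on the generating set $\{x_1, \ldots, x_n\}$. With this in hand, it suffices to find a matrix $M \in GL_n(\mathbb{Z})$ with $(a_1, \ldots, a_n) \cdot M = (a, 0, \ldots, 0)$ for some integer $a$. This is exactly the Smith normal form of a single row (equivalently, the Bezout identity applied repeatedly): setting $a = \gcd(a_1, \ldots, a_n)$ and repeatedly applying column operations of the form $(a_i, a_j) \mapsto (pa_i + qa_j, -sa_i + ra_j)$ with $pr + qs = 1$, we can zero out all entries but the first.

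Finally, lifting such a product of elementary matrices to a product of Nielsen transformations gives an automorphism $\sigma \in \mathrm{Aut}(F_n)$ with $\pi(\sigma(w)) = (a, 0, \ldots, 0)$. Thus $\sigma(w) \cdot x_1^{-a}$ lies in $\ker \pi = F_n'$, and since $F_n'$ is normal we can rewrite this as $\sigma(w) = x_1^a c$ with $c \in F_n'$, which is the required form.

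The only potentially subtle step is the surjectivity of $\mathrm{Aut}(F_n) \to GL_n(\mathbb{Z})$, but this is standard (it follows immediately from the fact that the elementary Nielsen transformations on the free generators induce the elementary matrices in $GL_n(\mathbb{Z})$, which generate the group). Everything else is routine linear algebra over $\mathbb{Z}$.
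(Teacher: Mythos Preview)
Your proof is correct and follows essentially the same approach as the paper: both reduce the exponent-sum vector to $(a,0,\ldots,0)$ via Nielsen transformations (the paper phrases this as ``running Euclid's algorithm on the weight tuple,'' you phrase it via surjectivity of $\mathrm{Aut}(F_n)\to GL_n(\mathbb{Z})$ and Smith normal form), and then observe that the resulting word differs from $x_1^a$ by an element of $F_n'$. The only difference is presentation---yours is slightly more abstract, the paper's slightly more hands-on---but the mathematical content is the same.
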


\begin{proof}
Let $w(x_1,x_2,\ldots,x_n)\in F_n$ be a word, where $F_n$ is the free group on the symbols $x_1,\dots,\,x_n$. We say the \emph{weight} of $w$ is the tuple $(a_1,\ldots,a_n)$, where $a_i$ is the sum of powers of $x_i$ in $w$. Since $w(x_1,x_2,\ldots,x_n)$ and $w(x_1x_i,x_2,\ldots,x_n)$ have the same image in any group,  we may run Euclid's algorithm on the weight tuple. By iteratively pushing letters to the front of the word, we may further assume that $w$ has the form $x_1^{a}c$, where $c$ is in $F_n'$.
\end{proof}

Now let $G$ be a group with exponent $e$ and $w=x^a c$ where $c$ is in $F_n'$ for some $n$. Let $d=\gcd(e,a) = re+sa$. Since $\gcd(s,e) = 1$, we have $G^s = G$, and thus $w(x_1,x_2,\ldots,x_n)$ and $w(x_1^s,x_2^s,\ldots,x_n^s)$ have the same image, i.e.,~$\Ima(x_1^ac) = \Ima(x_1^{sa}c') = \Ima(x_1^{d-re}c') = \Ima(x_1^dc')$. We have shown that a group $G$ is chiral if and only if it chiral for words of a specific form:

\begin{theorem}
A group $G$ with exponent $e$ is chiral if and only if $(G,w)$ is chiral for some word $w = x^kc$ where $k$ divides $e$ and $c$ is a product of commutators.
\end{theorem}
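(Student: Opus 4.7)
The plan is to handle the two directions separately. The backward direction is immediate from the definition: a word of the prescribed shape $x^k c$ with $k \mid e$ is still a word on $G$, so if $(G,w)$ is chiral for such a $w$, then $G$ is chiral.

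For the forward direction, I would start from an arbitrary $w$ with $(G,w)$ chiral and reduce. First, apply the previous lemma to produce an automorphism $\sigma$ of the ambient free group $F_n$ sending $w$ to $v = x_1^{a} c$ with $c \in F_n'$. Because $\sigma$ also sends $w^{-1}$ to $v^{-1}$, the Nielsen-invariance lemma applied to each of $w$ and $w^{-1}$ gives $G_w = G_v$ and $G_{w^{-1}} = G_{v^{-1}}$, so $(G,v)$ is still chiral.

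Next, following the calculation in the paragraph preceding the theorem, set $d = \gcd(e,a)$ and write $d = re + sa$ via B\'ezout, so $\gcd(s,e) = 1$ and the $s$-th power map is a bijection on $G$. The substitution $x_i \mapsto x_i^s$ therefore preserves images of word maps on $G$, both for $v$ and for $v^{-1}$. This substitution carries $c \in F_n'$ to a word $c' = c(x_1^s, \ldots, x_n^s) \in F_n'$ (endomorphisms of $F_n$ preserve the commutator subgroup), yielding a word $x_1^{sa} c'$ with the same image on $G$ as $v$. Since every $g \in G$ satisfies $g^e = 1$, the words $x_1^{sa}c'$ and $x_1^{sa - re} c' = x_1^{d} c'$ induce the same map on $G$, and likewise for inverses. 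Hence $(G, x_1^d c')$ is chiral and $d \mid e$ as required.

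The only subtlety is the bookkeeping: each reduction must preserve chirality simultaneously for the word and its inverse. Because the Nielsen-invariance lemma is stated for a single word, it has to be applied twice (once to $w$, once to $w^{-1}$), using the \emph{same} automorphism or power substitution each time; this is what forces $G_v = G_w$ and $G_{v^{-1}} = G_{w^{-1}}$ in tandem. Beyond this, no new idea is needed — the theorem is essentially the packaging of the preceding lemma with the B\'ezout substitution above.
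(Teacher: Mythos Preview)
Your proposal is correct and follows exactly the paper's approach: Nielsen-reduce to $x_1^{a}c$ with $c\in F_n'$, then use the B\'ezout substitution $x_i\mapsto x_i^{s}$ (with $\gcd(s,e)=1$) together with $g^{e}=1$ to replace $a$ by $d=\gcd(e,a)$. Your extra bookkeeping about applying each reduction simultaneously to $w$ and $w^{-1}$ is harmless but unnecessary, since $G_{w^{-1}}=(G_w)^{-1}$ always holds, so $G_w=G_v$ alone already forces $(G,w)$ and $(G,v)$ to have the same chirality status.
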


We will use the above proposition to prove the following:

\begin{corollary}\label{G^kG'}
Let $G$ be a group with finite exponent $e$. Let $G^k$ be the set of $k$th powers in $G$. If for every $k$ dividing $e$, every element of $G^k G' \setminus G^k$ is not a witness of chirality, then $G$ is achiral. 
\end{corollary}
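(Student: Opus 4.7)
The plan is to argue by contrapositive using the theorem just proved. Assume $G$ is chiral. Then there exist a divisor $k$ of $e$, some $n\geq 1$, and a $c\in F_n'$ such that $(G,w)$ is chiral for the word $w = x_1^k c(x_1,\dots,x_n)$. Pick a witness $x\in G_w$ with $x^{-1}\notin G_w$. I will show $x\in G^kG'\setminus G^k$ for this $k$, contradicting the hypothesis.

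The upper containment $G_w\subseteq G^kG'$ is immediate: for any $g_1,\dots,g_n\in G$, the value $w(g_1,\dots,g_n) = g_1^k \cdot c(g_1,\dots,g_n)$ is a product of $g_1^k\in G^k$ with $c(g_1,\dots,g_n)\in G'$, because $c$ lies in the commutator subgroup of $F_n$. In particular $x\in G^kG'$.

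The matching lower bound $G^k\subseteq G_w$ is the key point, and it follows from specialization: substituting $g_2=\dots=g_n=1$ makes every commutator appearing in $c$ trivial (a commutator with an identity entry is $1$), so $c(g_1,1,\dots,1)=1$ and $w(g_1,1,\dots,1)=g_1^k$; as $g_1$ ranges over $G$ we recover all of $G^k$. Since $G^k$ is closed under inverses ($(g^k)^{-1}=(g^{-1})^k$), no element of $G^k$ can witness chirality of $(G,w)$; in particular $x\notin G^k$, for otherwise $x^{-1}\in G^k\subseteq G_w$.

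Combining the two containments, $x\in G^kG'\setminus G^k$ is a witness of chirality, contradicting the hypothesis. There is no real obstacle; the heart of the argument is the specialization trick yielding $G^k\subseteq G_w$, after which the rest is bookkeeping.
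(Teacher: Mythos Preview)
Your proof is correct and essentially identical to the paper's: both invoke the preceding theorem to reduce to $w = x_1^k c$ with $c \in F_n'$, note $G_w \subseteq G^k G'$, and use the specialization $w(g,1,\dots,1) = g^k$ to rule out witnesses in $G^k$. One small wording issue: the reason $c(g_1,1,\dots,1)=1$ is not that each commutator factor acquires an identity entry (e.g.\ $c=[x_1^2,x_1x_2]$ does not), but rather that the evaluation $F_n\to G$ sending $x_1\mapsto g_1$, $x_i\mapsto 1$ has cyclic and hence abelian image $\langle g_1\rangle$, so all of $F_n'$ maps to $1$.
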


\begin{proof}
Consider a word $w$ of the form $x^k c$. Clearly, the image of $w$ is inside $G^k G'$. Suppose by way of contradiction that $(G,w)$ is chiral as witnessed by $g\in G$. Then $g\in G^k G'$ and by hypothesis $g\in G^k$. Let $g=h^k$ for some $h\in G$. Hence $w(h,1,\dots,1)=g$. But, then $g^{-1}=h^{-k}$ and $w(h^{-1},1,\dots,1)=g^{-1}$, contradicting $g$ as a witness to the chirality of $(G,w)$. Therefore $G$ is achiral. 
\end{proof}

The next few results show how the structure of a group $G$ limits the potential witnesses to the chirality of $G$. 

\begin{lemma}\label{inversion}
Let $N$ be an abelian group. Suppose another group $H$ acts on $N$ via automorphisms and consider $G=N\rtimes H$. Then there is an automorphism of $G$ that acts as inversion on $N$ and fixes $H$.
\end{lemma}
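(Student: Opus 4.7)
The natural candidate is the map $\phi : G \to G$ defined on the semidirect product decomposition by
\[
\phi(nh) = n^{-1} h \quad \text{for } n \in N,\ h \in H.
\]
Every element of $G$ has a unique expression $nh$, so $\phi$ is well-defined as a set map, it clearly restricts to identity on $H$ and to inversion on $N$, and it is its own inverse (hence bijective). So the only real content is showing $\phi$ is a homomorphism.

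My plan is to check the homomorphism property directly. Using the semidirect product identity $h n = {}^h n \cdot h$, the product $(n_1 h_1)(n_2 h_2)$ rearranges to $(n_1 \cdot {}^{h_1}n_2)\, h_1 h_2$, and applying $\phi$ yields
\[
\phi\bigl((n_1 h_1)(n_2 h_2)\bigr) = (n_1 \cdot {}^{h_1} n_2)^{-1} h_1 h_2 = ({}^{h_1}n_2)^{-1} n_1^{-1} h_1 h_2.
\]
On the other hand, $\phi(n_1 h_1)\phi(n_2 h_2) = n_1^{-1} h_1 n_2^{-1} h_2 = n_1^{-1} ({}^{h_1} n_2^{-1}) h_1 h_2 = n_1^{-1} ({}^{h_1}n_2)^{-1} h_1 h_2$. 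So the homomorphism property reduces to showing that $n_1^{-1}$ and $({}^{h_1} n_2)^{-1}$ commute, which holds precisely because both lie in $N$ and $N$ is abelian. This is the only place abelianness enters, and it is the single point where the argument could fail without that hypothesis.

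Once the homomorphism check is done, bijectivity follows immediately from $\phi \circ \phi = \operatorname{id}_G$ (since inverting twice is the identity on $N$ and $H$ is fixed). So $\phi \in \operatorname{Aut}(G)$, and by construction it has the required behavior on the two factors. I do not expect any genuine obstacle; the proof is essentially the one computation above, with the abelianness of $N$ as the only nontrivial ingredient.
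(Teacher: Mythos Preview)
Your proof is correct and follows essentially the same approach as the paper: define the map that inverts $N$ and fixes $H$, then verify directly via the semidirect product multiplication that it is a homomorphism, with the abelianness of $N$ being used exactly at the point where two elements of $N$ must commute. The only cosmetic differences are notational (left vs.\ right action) and that you check bijectivity via $\phi\circ\phi=\mathrm{id}$ rather than asserting it is ``clearly'' bijective.
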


\begin{proof}
Let $\sigma$ be a set map that inverts $N$ and fixes $H$ point-wise. We will show that $\sigma$ is a homomorphism of $G$. We will denote the action of $h$ on $n$ via $n^h$. Let $n,m\in N$ and $h,j\in H$ then 
\begin{flalign*}
(nh mj)^{\sigma} &= (nm^{h^{-1}} h j)^{\sigma} \\
&=(m^{h^{-1}})^{-1} n^{-1} h j \\
&=n^{-1}h m^{-1}j \\
&=(nh)^{\sigma} (mj)^{\sigma}.
\end{flalign*}

Hence $\sigma$ is a homomorphism. It is clearly surjective and injective, thus an automorphism. 
\end{proof}

\begin{lemma}\label{semi-direct component}
Let $G$ be a group with a normal subgroup $N$, such that $N$ is complemented in $G$ by $H$. Let $w$ be a word on $d$ variables. Suppose $w(g_1,\dots,g_d)=h\in H$ for some $g_1,\dots,g_d\in G$. Write $g_i=n_i h_i$. Then $w(h_1,\dots,h_d)=h$.
\end{lemma}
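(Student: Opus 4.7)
The plan is to exploit the natural quotient structure: since $N$ is normal in $G$ and $H$ is a complement to $N$, the quotient $G/N$ is canonically isomorphic to $H$, and this identification yields a surjective homomorphism $\pi \colon G \to H$ whose kernel is $N$. Concretely, I would define $\pi(nh) = h$ for $n \in N$ and $h \in H$, which is well-defined because each $g \in G$ has a unique factorization $g = nh$ with $n \in N, h \in H$ (this is the definition of $H$ being a complement to $N$).

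The key verification is that $\pi$ is a homomorphism. If $g_1 = n_1 h_1$ and $g_2 = n_2 h_2$, then using normality of $N$ we write
\[
g_1 g_2 = n_1 h_1 n_2 h_2 = \bigl(n_1 \cdot h_1 n_2 h_1^{-1}\bigr)(h_1 h_2),
\]
and since $h_1 n_2 h_1^{-1} \in N$, this is the unique $NH$-factorization of $g_1 g_2$; hence $\pi(g_1 g_2) = h_1 h_2 = \pi(g_1)\pi(g_2)$. The inversion identity $\pi(g^{-1}) = \pi(g)^{-1}$ follows similarly (or from the equation just derived).

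Once $\pi$ is a homomorphism, the conclusion follows by applying $\pi$ to both sides of $w(g_1,\ldots,g_d) = h$. Since $\pi$ commutes with products and inverses, and $\pi(g_i) = h_i$ by construction while $\pi(h) = h$ because $h \in H$, we get
\[
w(h_1,\ldots,h_d) = w(\pi(g_1),\ldots,\pi(g_d)) = \pi(w(g_1,\ldots,g_d)) = \pi(h) = h,
\]
as desired. There is really no main obstacle here; the content is entirely in recognizing that $g \mapsto (\text{its } H\text{-component})$ is a homomorphism, which is a standard consequence of $N$ being normal and $H$ being a complement.
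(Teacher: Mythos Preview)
Your proof is correct and is essentially the same as the paper's. The paper phrases it as showing $w(g_1,\dots,g_d)=n\cdot w(h_1,\dots,h_d)$ for some $n\in N$ (by pushing the $N$-parts past the $H$-parts via conjugation) and then invoking $N\cap H=1$; this is precisely the verification that the projection $\pi:G\to H$ is a homomorphism, which you carry out explicitly.
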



\begin{proof}
By considering the action of $H$ on $N$, $w(g_1,\dots,g_d)$ can be written as $n\cdot w(h_1,\dots,h_d)$ for some $n\in N$. Since $N\cap H=1$ we conclude that $n=1$ and $w(h_1,\dots,h_d)=h$. 
\end{proof}

\begin{corollary}\label{semi-direct witnesses}
Let $N$ and $H$ be groups and $G=N\rtimes H$. If $H$ is achiral, then no element of $H$ can witness the chirality of $G$. 
\end{corollary}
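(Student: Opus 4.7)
The plan is to combine Lemma \ref{semi-direct component} with the hypothesis that $H$ is achiral. Suppose $h \in H$ lies in $G_w$ for some word $w$ in $d$ variables; we must show $h^{-1} \in G_w$ as well, so that $h$ fails to witness chirality of $G$.

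First I would pick a realization: choose $g_1, \dots, g_d \in G$ with $w(g_1, \dots, g_d) = h$, and decompose $g_i = n_i h_i$ using the semidirect product structure. By Lemma \ref{semi-direct component}, we then have $w(h_1, \dots, h_d) = h$ with all $h_i \in H$. Thus $h$ lies in the image $H_w$ of the word map $w: H^{(d)} \to H$.

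Now I would invoke the achirality of $H$: since $(H, w)$ is not chiral, $H_w = H_{w^{-1}}$, and in particular $h^{-1} \in H_w$, so there exist $h_1', \dots, h_d' \in H$ with $w(h_1', \dots, h_d') = h^{-1}$. Viewing these elements inside $G$ via the inclusion $H \hookrightarrow G$, we obtain $h^{-1} \in G_w$, which shows that $h$ is not a witness to the chirality of $(G, w)$. Since $w$ was arbitrary, $h$ is not a witness to the chirality of $G$. The argument is essentially bookkeeping, with the content already packaged in Lemma \ref{semi-direct component}, so there is no real obstacle.
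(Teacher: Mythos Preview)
Your proof is correct and follows exactly the approach of the paper: the paper's one-line argument asserts that $h\in H$ witnesses the chirality of $(G,w)$ if and only if it witnesses the chirality of $(H,w)$, and your proof is simply an explicit unpacking of this via Lemma~\ref{semi-direct component} and the inclusion $H\hookrightarrow G$.
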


\begin{proof}
For a group $G=N\rtimes H$, 
$h\in H$ is a witness to the chirality of $(G,w)$ if and only if $h\in H$ is a witness to the chirality of $(H,w)$. 
\end{proof}

\begin{theorem}\label{abelian normal subgroup splits}
Let $N$ be an abelian group and $H$ be an achiral group with $G = N \rtimes H$. If every element of $G$ is automorphic to an element of either $N$ or $H$, then $G$ is achiral.
\end{theorem}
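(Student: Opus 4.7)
The plan is to show directly that no element $g \in G$ can witness the chirality of any word map on $G$. By hypothesis each $g$ is automorphic to some element in $N \cup H$, so we handle these two cases separately, using the key fact that $G_w$ is closed under $\mathrm{Aut}(G)$ (since $\sigma(w(g_1,\dots,g_d)) = w(\sigma(g_1),\dots,\sigma(g_d))$ for any automorphism $\sigma$).

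First I would handle the case where $g$ is automorphic to some $n \in N$. Since $N$ is abelian and normal in $G = N \rtimes H$, Lemma \ref{inversion} supplies an automorphism $\tau$ of $G$ that inverts every element of $N$. So $n$ is automorphic to $n^{-1}$, and composing with the automorphism sending $g \mapsto n$ shows that $g$ is automorphic to $g^{-1}$. Therefore if $g \in G_w$, applying this composition to a preimage tuple of $g$ yields a preimage tuple for $g^{-1}$, so $g^{-1} \in G_w$ and $g$ is not a witness.

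Next I would handle the case where $g$ is automorphic to some $h \in H$ via an automorphism $\sigma$ of $G$. Suppose $g \in G_w$; then $h = \sigma(g) \in G_w$ because word images are closed under automorphisms. Now Corollary \ref{semi-direct witnesses} says that no element of $H$ witnesses chirality of $G$ because $H$ is achiral, so $h^{-1} \in G_w$. Pulling this back by $\sigma^{-1}$ gives $g^{-1} \in G_w$, so $g$ is not a witness either.

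Since no element of $G$ can witness chirality of any $(G,w)$, the group $G$ is achiral. I do not anticipate a serious obstacle: both cases reduce to applications of results already in the paper (Lemma \ref{inversion} and Corollary \ref{semi-direct witnesses}), glued together by the fact that $G_w$ is $\mathrm{Aut}(G)$-invariant. The only point worth stating cleanly is this invariance, which immediately implies that ``$g$ is a witness'' is a property of the $\mathrm{Aut}(G)$-orbit of $g$, allowing the reduction to representatives lying in $N$ or $H$.
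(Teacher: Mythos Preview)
Your proposal is correct and follows essentially the same approach as the paper's proof: reduce to witnesses in $N$ or $H$ via $\mathrm{Aut}(G)$-invariance of $G_w$, then dispatch the $N$ case with Lemma~\ref{inversion} and the $H$ case with Corollary~\ref{semi-direct witnesses}. The only difference is that you spell out the orbit-invariance of ``being a witness'' more explicitly than the paper does.
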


\begin{proof}
We will show that no element of $G$ can witness the chirality of $(G,w)$. Since the images of word maps are closed under automorphisms, if $(G,w)$ was chiral, there would be a witness in either $N$ or $H$.

From Lemma \ref{inversion} there is an automorphism $\sigma$ of $G$ that acts as inversion on $N$ and fixes $H$. Since the image of a word map is closed under automorphisms, no element of $N$ can witness the chirality of $(G,w).$ Corollary \ref{semi-direct witnesses} states that no element of $H$ can witness the chirality of $(G,w)$. Hence $G$ is achiral. 
\end{proof}

We can restate Theorem \ref{abelian normal subgroup splits} in terms of witnesses:
\begin{theorem}
Let $N$ be an abelian group and $H$ be an achiral group with $G = N \rtimes H$. No witness of the chirality of $G$ can be automorphic to an element of $N$ or $H$. 
\end{theorem}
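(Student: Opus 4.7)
The plan is to observe that this theorem is essentially a repackaging of Theorem \ref{abelian normal subgroup splits}, where the hypothesis that every element of $G$ is automorphic to an element of $N$ or $H$ is removed in exchange for a weaker conclusion about which elements can serve as witnesses. Accordingly, I would extract the relevant half of the previous argument and apply it element by element.

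First I would recall the key closure property: if $x$ witnesses the chirality of $(G,w)$ and $\sigma \in \mathrm{Aut}(G)$, then $\sigma(x)$ also witnesses the chirality of $(G,w)$. This is because $G_w$ and $G_{w^{-1}}$ are both closed under automorphisms of $G$, so $x \in G_w \setminus G_{w^{-1}}$ implies $\sigma(x) \in G_w \setminus G_{w^{-1}}$. Thus to rule out a witness $x$ automorphic to some $y \in N \cup H$, it suffices to rule out $y$ itself as a witness.

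Next I would handle the two cases separately, exactly as in the proof of Theorem \ref{abelian normal subgroup splits}. For $y \in N$, Lemma \ref{inversion} produces an automorphism $\tau$ of $G$ that inverts $N$ pointwise and fixes $H$; in particular $\tau(y) = y^{-1}$. If $y \in G_w$, then by automorphism closure $y^{-1} = \tau(y) \in G_w$, so $y$ cannot witness chirality. For $y \in H$, Corollary \ref{semi-direct witnesses} directly states that $y$ cannot be a witness, because $H$ is achiral and the evaluation argument of Lemma \ref{semi-direct component} reduces any word evaluation equal to $y$ in $G$ to one in $H$.

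Combining these two cases with the automorphism-closure observation gives the statement: no witness of the chirality of $G$ is automorphic to any element of $N$ or $H$. The argument is essentially bookkeeping on top of Lemma \ref{inversion} and Corollary \ref{semi-direct witnesses}, so I do not expect any genuine obstacle; the only subtlety is being careful that a putative witness $x$ automorphic to $y \in N$ or $y \in H$ really does transfer the contradiction via $\sigma$, which is handled by the closure of word images under automorphisms.
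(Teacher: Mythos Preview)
Your proposal is correct and matches the paper's approach: the paper presents this theorem as a restatement of Theorem~\ref{abelian normal subgroup splits} in terms of witnesses, with no separate proof, and you have correctly identified and extracted the two relevant ingredients (Lemma~\ref{inversion} for elements of $N$, Corollary~\ref{semi-direct witnesses} for elements of $H$) together with the automorphism-closure observation that transfers the contradiction.
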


\section{The Computability of Chirality} \label{comp-of-chi}

We will give an algorithm that determines if a given finite group is chiral. As part of our algorithm, we will calculate for all words on some specified number of variables all of the sets $G_w$; this part of our algorithm is similar to an algorithm originally discovered by Neumann in \cite{Neumann}. We start by proving the following lemma, which says that the chirality of a finitely generated group is only dependent on words of a given number of variables. 

\begin{lemma}\label{locality of chirality}
If a group $G$ is generated by $d$ elements, then $G$ is chiral if and only if there is a word $w$ on $d$ variables such that $(G,w)$ is chiral.
\end{lemma}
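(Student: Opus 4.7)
The reverse direction is immediate from the definition: if some word $w$ on $d$ variables makes $(G,w)$ chiral, then $G$ is chiral. So the substantive direction is to show that if $G$ is chiral at all, then a $d$-variable word already witnesses this. The plan is to start with any word $w(x_1,\dots,x_n)$ and an element $g\in G_w\setminus G_{w^{-1}}$ witnessing the chirality of $(G,w)$, and then exploit the generation hypothesis to fold the $n$ arguments down to $d$.

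Concretely, fix generators $a_1,\dots,a_d$ of $G$ and pick $g_1,\dots,g_n\in G$ with $w(g_1,\dots,g_n)=g$. Since the $a_i$ generate $G$, each $g_i$ can be written as $u_i(a_1,\dots,a_d)$ for some word $u_i\in F_d$. Define
\[
v(y_1,\dots,y_d)\;:=\;w\bigl(u_1(y_1,\dots,y_d),\,\dots,\,u_n(y_1,\dots,y_d)\bigr)\in F_d.
\]
Then $v(a_1,\dots,a_d)=g$, so $g\in G_v$. On the other hand, for any tuple $(h_1,\dots,h_d)\in G^{(d)}$ the value $v(h_1,\dots,h_d)$ is obtained by evaluating $w$ at the tuple $(u_1(h_1,\dots,h_d),\dots,u_n(h_1,\dots,h_d))\in G^{(n)}$; hence $G_v\subseteq G_w$. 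Since the witness $g$ satisfies $g^{-1}\notin G_w$, a fortiori $g^{-1}\notin G_v$, and therefore $g$ witnesses the chirality of $(G,v)$, a word on exactly $d$ variables.

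There is no real obstacle here beyond bookkeeping, so the main thing to be careful about is the direction of the inclusion $G_v\subseteq G_w$ and the fact that an element witnesses chirality of $(G,v)$ precisely when it lies in $G_v$ but not in $G_v^{-1}$; the inclusion $G_v\subseteq G_w$ (and hence $G_v^{-1}\subseteq G_w^{-1}$) is exactly what lets us transport non-membership of $g^{-1}$ from $G_w$ down to $G_v$. No assumption on finiteness of $G$ is needed; the argument uses only that $G$ is $d$-generated.
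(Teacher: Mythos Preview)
Your proof is correct and follows essentially the same approach as the paper: substitute $d$-variable words expressing the arguments in terms of a fixed generating set into the original word, then use the inclusion $G_v\subseteq G_w$ to transport the witness. Your version is in fact slightly cleaner, since you pick the specific substitution directly rather than enumerating all of them, and you correctly observe that no finiteness assumption on $G$ is needed.
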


\begin{proof}
We need only to show that if $G$ is chiral, then there is a word $w$ on $d$ variables that witnesses the chirality of $G$. 

Fix a generating set $g_1,g_2,\ldots,\,g_d$ of $G$, and fix $d$-variables words $u_1,u_2,\ldots,\,u_{|G|}$, such that $u_i(\overline{g})$ enumerates $G$. Let $v$ be a word with $k$ variables. Then \[v(u_{i_1},u_{i_2},\ldots,u_{i_k})(G^d) \subseteq v(G^k).\] On the other hand, every $k$-tuple from $G$ can be written as $(u_{i_1}(\overline{g}),\ldots,u_{i_k}(\overline{g}))$, so we have
$$\bigcup\limits_{1\leq i_j\leq |G|} v(u_{i_1},u_{i_2},\ldots,u_{i_k})(G^d) = v(G^k)$$

Now, if $(G,v)$ is chiral, then $v(G^k)$ is not closed under inverse for some $v$, thus there are some $\overline{i}$ such that $w = v(u_{i_1},u_{i_2},\ldots,u_{i_k})$ witnesses the chirality of $G$.
\end{proof}

We see that to check chirality, it suffices to check the $d$-variable words. Indeed, these word maps naturally form a group.

\begin{definition}
For a $d$-generated group $G$, define $W(G)$ to be the set of all word maps on $d$ variables on $G$. For $w,u \in W(G)$, we define $w\cdot u$ to be the word map given by $w$ concatenated with $u$, which is the same as point-wise multiplication as maps. The gives $W(G)$ a group structure, and it naturally embeds into the direct product $G^d$.
\end{definition}

We are now ready to state Neumann's algorithm:

\begin{theorem}
There is an algorithm, when given a finite group as input, outputs whether $G$ is chiral.
\end{theorem}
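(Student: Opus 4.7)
The plan is to reduce the chirality test to a finite search inside the group $W(G)$. By Lemma~\ref{locality of chirality}, once I compute any generating set of $G$ of size $d$ (which is possible since $G$ is finite), $G$ is chiral if and only if some $w \in W(G)$ has image not closed under inversion. Thus it suffices to enumerate $W(G)$ and, for each of its elements, test the image.

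The key observation is that $W(G)$ is finite: it sits as a subgroup of the finite group $\mathrm{Map}(G^d, G)$ of all functions $G^d \to G$ under pointwise multiplication, which has order $|G|^{|G|^d}$. To construct $W(G)$ algorithmically I would start from the projections $\pi_i : (g_1,\ldots,g_d) \mapsto g_i$ for $i = 1, \ldots, d$, which are the word maps of the single-letter words $x_i$, and iteratively close the current set under pointwise multiplication and pointwise inversion until it stabilizes. Finiteness of $\mathrm{Map}(G^d, G)$ forces termination, and the output is precisely the subgroup of $\mathrm{Map}(G^d, G)$ generated by the projections, namely $W(G)$.

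Finally, for each $w \in W(G)$ I would compute $G_w$ by evaluating $w$ on every tuple in $G^d$, and check whether $\{g^{-1} : g \in G_w\} \subseteq G_w$. If every $w$ passes this test, declare $G$ achiral; otherwise declare it chiral. Correctness follows at once from Lemma~\ref{locality of chirality}.

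The argument itself is essentially bookkeeping; the main conceptual point is that although the set of abstract words in the free group is infinite, they collapse onto the finite set $W(G)$ of actual maps on $G$. The real obstacle is not correctness but complexity: the double-exponential bound $|G|^{|G|^d}$ on $|W(G)|$ renders the algorithm wildly impractical even for modest $|G|$, and any realistic implementation would need structural input (for example the reductions from Section~\ref{prop-of-chi}) to prune the search.
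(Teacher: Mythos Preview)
Your proposal is correct and follows essentially the same approach as the paper: reduce to $d$-variable words via Lemma~\ref{locality of chirality}, observe that the word maps form a finite group $W(G)$ inside the function group $\mathrm{Map}(G^d,G)$, enumerate $W(G)$ by closing the projections under the group operations, and test each image for closure under inversion. The paper phrases the enumeration as building the Cayley graph of $W(G)$ on the generators $x_1,\dots,x_d$, but this is the same closure procedure you describe; your remarks on impracticality also mirror the paper's.
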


\begin{proof}
Let $G$ be $d$-generated. We first build the Cayley graph of $W(G)$. For a vertex with label $w$, a word in $F_d$, there is an outward edge labeled $x_i$ that connects to a vertex labeled by the word map $wx_i$. We then check if this (as a map) is equal to some existing vertex. For every existing vertex, the check is finite since the group is finite, and there are only finitely many existing vertices. This process terminates since $W(G)$, being a subgroup of $G^d$, is finite. Now for each vertex, we check if the (finite) image of the map is chiral. If it is chiral for any word map, we return \emph{chiral}, otherwise we return \emph{achiral}.
\end{proof}

This construction is actually related to the theory of varieties of groups. We will digress, and elaborate more on this relation. Most, if not all, of these are rephrasing results from \cite{Neumann}. We first recall some terminologies from \cite{}[MKS?].

\begin{definition}
A group $G$ is said to satisfy a \emph{law} $w\in F_d$, if $w(G) = 1$.

A \emph{variety of groups} are the class of groups satisfying a given collections of laws.

A subgroup $H$ of $G$ is called a \emph{verbal subgroup} if it is generated by the image of some word map.

A group $G$ is called a \emph{reduced free group} if it is the quotient of a free group by a verbal subgroup. These are also the ``free objects" (among the $d$-generated groups) in some variety of groups.

For a $d$-generated group $G$, we write $FV(G)$ to be the reduced free group of rank $d$ in the variety generated by $G$.
\end{definition}

For example, free groups, free abelian groups, and free nilpotent groups are all examples of reduced free groups. 

Note that the projection maps $\pi_i:G^d \to G$ are realized as word maps $x_i$, and they form the standard generating set of $W(G)$. This gives an identification of the groups $W(G)$ and $FV(G)$.

\begin{proposition}
Let $G$ be a $d$-generated group. Then $W(G)\cong FV(G)$.
\end{proposition}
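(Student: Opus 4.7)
The plan is to construct the isomorphism by factoring the natural evaluation map through $FV(G)$. First I would define $\phi: F_d \to W(G)$ by sending each generator $x_i$ of $F_d$ to the projection $\pi_i: G^d \to G$, which, as the paragraph preceding the proposition observes, is realized as the word map $x_i$. Since every element of $W(G)$ is by definition a $d$-variable word map, $\phi$ is surjective. It is a homomorphism because the group operation on $W(G)$ is concatenation of words (equivalently, pointwise multiplication), which matches multiplication in $F_d$.

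Next I would identify $\ker \phi$. An element $w \in F_d$ lies in the kernel exactly when the map $w: G^d \to G$ is constantly $1$, i.e., when $w$ is a law of $G$. Let $L(G) \subseteq F_d$ be the set of all laws of $G$ in $d$ variables. A small verification shows $L(G)$ is a fully invariant subgroup of $F_d$: if $w \in L(G)$ and $\sigma \in \operatorname{End}(F_d)$, then evaluating $\sigma(w)$ on any tuple $\overline{g} \in G^d$ equals evaluating $w$ on the tuple $(\sigma(x_1)(\overline{g}),\ldots,\sigma(x_d)(\overline{g}))$, which is $1$ since $w$ is a law.

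Finally, by Birkhoff's HSP theorem (or directly from the paper's description of $FV(G)$ as the free $d$-generated object in the variety generated by $G$), the variety generated by $G$ is cut out precisely by the laws of $G$, and its reduced free group of rank $d$ is $F_d / L(G)$. Applying the first isomorphism theorem to $\phi$ then gives
\[
W(G) \;\cong\; F_d/\ker\phi \;=\; F_d/L(G) \;\cong\; FV(G),
\]
as desired.

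There is no substantive obstacle here; the proof is definition-chasing. The only point requiring some care is the identification of $L(G)$ with the verbal subgroup defining the variety generated by $G$, which is why I would invoke Birkhoff (or the paper's own definition of $FV(G)$) rather than try to produce a more elementary argument.
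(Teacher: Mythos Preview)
Your proof is correct and follows essentially the same route as the paper: both arguments rest on the observation that a word $w$ is trivial in $W(G)$ if and only if $w$ is a law of $G$, which is exactly the defining relation set for $FV(G)$. The paper constructs the map $W(G)\to FV(G)$ directly on generators and checks this kernel condition, whereas you factor through $F_d$ and apply the first isomorphism theorem, but the content is identical.
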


\begin{proof}
Note that $FV(G)$ has the standard presentation $\langle a_1,\ldots a_d \mid R\rangle$, where $R$ is the set of all the laws $G$ satisfies, substituted by words in $a_i$'s. Consider the map $\phi$, sending $x_i$ to $a_i$. This can be extended to an automorphism. Indeed, $w = 1$ in $W(G)$ if and only if $G$ satisfies $w$ as a law, which happens exactly when $\phi(w) = 1$. Thus $W(G) \cong FV(G)$.
\end{proof}

Note that this also shows that for a finite group $G$, $FV(G)$ is finite, since $W(G) \subseteq G^d$ is finite. Furthermore, the algorithm actually builds the group $FV(G)$, and in particular, enumerates all the $d$-variable laws satisfied by $G$. In \cite{Neumann}, it was pointed out that this can be used to find all laws satisfied by $G$ with a bounded number of variables, but does not give a finite process to find \emph{all} laws. In our case, since chirality can be reduced to a property on words with a bounded number of variables, it suffices to stop at a finite stage, hence yielding an algorithm.

In practice, Neumann's algorithm and our implementation of it are time and memory intensive and do not yield a practical method for determining if a finite group is chiral. For example, $FV(S_3)$ has order 972. A result of Waldemar Ho\l{}obowski \cite{Holobowski} shows that if $G$ is SmallGroup(20,3) then \[|FV(G)| = 122070317250000.\] 

\begin{remark}
We abuse notation and say a word $w$ is chiral if there is some group for which $(G,w)$ is chiral. The chirality of a word is decidable, i.e., there is an algorithm, when input a word, outputs whether the word is chiral or not. Indeed, Given a word $w$, the word is chiral if and only if it is chiral in some free group, which is equivalent to saying the free group does not satisfy the first-order sentence $\forall \overline{x} \exists \overline{y} w(\overline{x}) \cdot w(\overline{y}) = 1$. This is a sentence in the positive theory of the free group, which coincides for all nonabelian free groups \cite{Merzljakov} and is decidable \cite{Makanin}. 
\end{remark}

\section{An Infinite Family of Chiral Groups}\label{inf-family-of-chi}
We first note that achirality is preserved under quotienting:

\begin{proposition}
Let $H$ be a homomorphic image of $G$. If $H$ is chiral, then $G$ is also chiral.
\end{proposition}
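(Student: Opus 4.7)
The plan is to prove the contrapositive: if $G$ is achiral, then every homomorphic image $H$ of $G$ is achiral. Let $\pi: G \to H$ be a surjective homomorphism, and let $w$ be an arbitrary word on $k$ variables. The key intermediate claim is the equality $\pi(G_w) = H_w$. Each element of $G_w$ has the form $w(g_1,\dots,g_k)$, and since $\pi$ is a homomorphism we have $\pi(w(g_1,\dots,g_k)) = w(\pi(g_1),\dots,\pi(g_k)) \in H_w$, which gives the inclusion $\pi(G_w) \subseteq H_w$. The reverse inclusion uses surjectivity of $\pi$: given $h_1,\dots,h_k \in H$, we choose preimages $g_i \in \pi^{-1}(h_i)$, and then $w(h_1,\dots,h_k) = \pi(w(g_1,\dots,g_k)) \in \pi(G_w)$.

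Once this is established, the conclusion is immediate. Assuming $G$ is achiral, we have $G_w = G_{w^{-1}}$ for every word $w$. Applying $\pi$ to both sides and using the claim yields $H_w = \pi(G_w) = \pi(G_{w^{-1}}) = H_{w^{-1}}$, so $(H,w)$ is not chiral. Since this holds for arbitrary $w$, $H$ is achiral, proving the contrapositive.

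There is no real obstacle here; the statement is essentially a straightforward functoriality observation about verbal images under surjective homomorphisms. The one subtlety worth being careful about is ensuring the surjectivity of $\pi$ is actually used — without it, only the inclusion $\pi(G_w) \subseteq H_w$ holds, which would not suffice. Given this, a clean two- or three-line proof is all that is required.
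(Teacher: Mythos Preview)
Your proof is correct and rests on the same core observation as the paper's: for a surjective homomorphism $\pi:G\to H$ one has $\pi(G_w)=H_w$. The only difference is cosmetic---you argue by contrapositive and prove the set identity $\pi(G_w)=H_w$ explicitly, whereas the paper argues directly by lifting a witness $h\in H_w$ to $g\in G_w$ and showing $g^{-1}\notin G_w$; both use surjectivity in the same place and for the same reason.
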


\begin{proof}
Let $\phi$ be the homomorphism taking $G$ onto $H$. Let $w$ be a word witnessing the chirality of $H$, i.e.~there is $h\in w(H)$ with $h^{-1} \notin w(H)$. Suppose $w(\overline{y}) = h$ for $\overline{y}\in H$. Let $\overline{x}\in G$ be such that $\phi(\overline{x}) = \overline{y}$, and we have $\phi(w(\overline{x})) = h$. Write $g = w(\overline{x})$.

Now we claim that $g^{-1} \notin w(G)$. If not, let $w(\overline{x}') = g^{-1}$. Then $h^{-1} = \phi(g^{-1}) = \phi(w(\overline{x}')) = w(\phi(\overline{x}')) \in w(H)$, a contradiction. Thus $G$ is also achiral as witnessed by $w$ and $g$.
\end{proof}

Hence a classification of all finite chiral groups depends only on classifying those that do not have a proper chiral quotient. We call such groups \emph{minimal chiral}. The next theorem shows the existence of an infinite family of minimal chiral groups. Moreover, both SmallGroup(63,1) and SmallGroup(80,3) are part of this family. 

\begin{lemma}\label{family}
Let $C_p$ acts on $C_q$ by multiplying by $\phi$, and assume that $\phi$ has order $p$ and $\phi-1$, $\phi+1$ are both coprime to $q$. Then for $p \mid r$, $C_q \rtimes C_{pr}$ with the action multiplying by $\phi$ is chiral with the word $w=a^p[a,b][a^{-1},b]^\phi$.
\end{lemma}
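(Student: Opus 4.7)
The plan is to evaluate $w$ in explicit normal-form coordinates for $G$, to exhibit $ba^p$ in its image, and to show that the inverse $b^{-1}a^{-p}$ is not. Write $a$ for a generator of $C_{pr}$ and $b$ for a generator of $C_q$, with $aba^{-1}=b^\phi$, so every element of $G$ has a unique form $b^m a^n$ and $(b^{m_1}a^{n_1})(b^{m_2}a^{n_2}) = b^{m_1+m_2\phi^{n_1}}a^{n_1+n_2}$. Since $G/C_q$ is abelian, every commutator in $G$ lies in $C_q$, so $[x^{-1},y]^\phi$ is an unambiguous element of $C_q$ (independent of the integer lift of $\phi$ chosen).

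The first step is to specialize $x=a$, $y=b^\ell$. A direct expansion gives $[a,b^\ell] = b^{\ell(1-\phi^{-1})}$ and $[a^{-1},b^\ell]^\phi = b^{\ell\phi(1-\phi)}$, hence $w(a,b^\ell) = b^{\ell T}a^p$ with $T = (1-\phi^{-1})+\phi(1-\phi) = -\phi^{-1}(\phi-1)^2(\phi+1)$. By the hypotheses that $\phi\pm1$ are coprime to $q$, this $T$ is a unit modulo $q$, so $\ell\mapsto\ell T$ is a bijection of $\mathbb{Z}/q\mathbb{Z}$, and the entire coset $C_q\cdot a^p$ lies in $G_w$; in particular $ba^p\in G_w$.

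The second (and more delicate) step is to prove the stronger fact that $G_w\cap(C_q\cdot a^{-p})=\{a^{-p}\}$, which will force $(ba^p)^{-1}=b^{-1}a^{-p}\notin G_w$. For any $x=b^m a^n$ and $y=b^\ell a^k$, the $G/C_q$-projection of $w(x,y)$ is $a^{np}$, so landing in $C_q\cdot a^{-p}$ requires $n\equiv -1\pmod r$; because $p\mid r$ and $\phi$ has order $p$, this yields $\phi^n=\phi^{-1}$ and $\phi^{n+1}=1$. Under these identities the geometric sum $S_n=1+\phi^n+\cdots+\phi^{(p-1)n}$ vanishes (the usual argument using that $\phi^{-1}-1$ is a unit), so $x^p = a^{-p}$. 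A bookkeeping expansion of the product $[x,y][x^{-1},y]^\phi = b^A$ exhibits $A$ as a bilinear form in $m,\ell$ whose $m$-coefficient is proportional to $\phi^{-n}-\phi$ and whose $\ell$-coefficient is proportional to $\phi^{-n}+\phi^{n+1}-1-\phi$; both vanish after the substitution $\phi^{-n}=\phi$, $\phi^{n+1}=1$. Hence $A=0$ and $w(x,y)=a^{-p}$.

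The main obstacle is the commutator bookkeeping of the second step. The two families of hypotheses are used in opposing ways: $\gcd(\phi\pm1,q)=1$ forces $T$ to be a unit (so an entire $C_q$-coset appears in $G_w$ on the $a^p$ side), while $p\mid r$ triggers the exact cancellation of both coefficients of $A$ (so nothing beyond $\{a^{-p}\}$ appears on the $a^{-p}$ side). The asymmetry between these two sides is precisely what produces the chirality of $(G,w)$.
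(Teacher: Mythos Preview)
Your proof is correct and follows essentially the same approach as the paper's: compute $w$ explicitly in semidirect-product coordinates, show that one $C_q$-coset is entirely in the image while the opposite coset meets the image in a single point, and conclude chirality from that asymmetry.

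One cosmetic discrepancy worth flagging: because you use the normal form $b^ma^n$ with $aba^{-1}=b^{\phi}$ while the paper implicitly uses the opposite action convention, your ``full'' coset is $C_qa^{p}$ and your ``singleton'' coset is $C_qa^{-p}$, exactly the reverse of the paper's $(p,0)$ versus $(-p,0)C_q$. This is purely a $\phi\leftrightarrow\phi^{-1}$ relabeling (the hypotheses are symmetric under it), and both computations are valid. Your bookkeeping in Step~2 checks out: the $m$-coefficient of $A$ factors as $(\phi^{-k}-1)(\phi^{-n}-\phi)$ and the $\ell$-coefficient as $-\phi^{-k}(\phi^{-n}+\phi^{n+1}-1-\phi)$, both of which vanish once $\phi^{-n}=\phi$ and $\phi^{n+1}=1$.
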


\begin{proof}
Write $a = (x,n)$ and $b = (y,m)$ in $C_{pr}\ltimes C_q$. We compute:
$$a^p = (px,n+\phi^x n+\cdots+\phi^{x(p-1)}n)$$
$$[a,b] = (0,-n-\phi^x m+\phi^y n + m)$$
$$[a^{-1},b] = (0,\phi^{-x}n-\phi^{-x}m-\phi^{y-x}n+m)$$

Note that when $x = 1$, $a^p = (p,0)$ since $\gcd(\phi-1,q)=1$ implies $1+\phi+\cdots+\phi^{p-1}\equiv 0 (q)$.

Note that the first coordinate of $w$ is $p$ if and only if $px = p (pr)$, thus $x = 1 (r)$. Note that $\phi^r = 1 (q)$ since the action has order $p\mid r$. Thus, when $x=1(r)$, we have:
$$a^p = (p,0)$$
$$[a,b]=(0,-n-\phi m+\phi^y n+m)$$
$$[a^{-1},b]=(0,\phi^{-1}n-\phi^{-1}m-\phi^{y-1}n+m)$$

Thus, $a^p[a,b][a^{-1},b]^\phi = (p,0)$ if the first coordinate is $p$.

Now consider when the first coordinate is $-p$. Again, this happens if and only if $x=-1(r)$. We again compute:
$$a^p = (-p,0)$$
$$[a,b]=(0,-n-\phi^{-1} m+\phi^y n+m)$$
$$[a^{-1},b]=(0,\phi n-\phi m-\phi^{y+1}n+m)$$
And
\begin{align*}
a^p[a,b][a^{-1},b]^\phi &= (-p,-n-\phi^{-1} m+\phi^y n+m+\phi^2 n-\phi^2 m-\phi^{y+2}n+\phi m)\\
{}&=(-p,(\phi+1)(\phi-1)(1-\phi^y)n + (1+\phi)(1-\phi)(1-\phi^{-1})m)
\end{align*}

Since $\phi-1$ and $\phi+1$ are both coprime to $q$, as $n,m,y$ ranges over various values, this ranges over the coset $(-p,0)C_q$. The inverse of the coset $(-p,0)C_q$ is $(p,0)C_q$, but the image of the word does not include any elements of the form $(p,0)C_q$ except for $(p,0)$. Therefore, $(G,w)$ is chiral.

\end{proof}

Hence we have shown that SmallGroup$(63,1)$ and SmallGroup$(80,3)$ are achiral. We can now prove Theorem \ref{Listing}, restated below:
\begin{theorem}
The only chiral groups with order less than 108 are SmallGroup $(63,1)$ and SmallGroup $(80,3)$.
\end{theorem}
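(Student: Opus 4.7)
The plan is to split the groups of order less than $108$ into the two claimed chiral examples and the remainder, which must be shown achiral.

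For the chiral side, I would specialize Lemma \ref{family}. Taking $(p,q,r,\phi) = (3,7,3,2)$ gives $C_7 \rtimes C_9 \cong \text{SmallGroup}(63,1)$, and $(p,q,r,\phi)=(4,5,4,2)$ gives $C_5\rtimes C_{16} \cong \text{SmallGroup}(80,3)$. In each case $\phi-1$ and $\phi+1$ are coprime to $q$, so Lemma \ref{family} exhibits an explicit chirality-witnessing word $a^p[a,b][a^{-1},b]^{\phi}$. A short check against the SmallGroups database confirms that these are, up to isomorphism, the only groups of order below $108$ that fit the hypotheses of Lemma \ref{family}.

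For the achiral side, I would dispose of large classes in bulk using the structural criteria of Section \ref{prop-of-chi}, reserving the algorithm of Section \ref{comp-of-chi} for whatever is left. All abelian groups fall to Lemma \ref{autmorphic to inverse}, since inversion is an automorphism. A large proportion of the nonabelian groups below order $108$ are of the form $N\rtimes H$ with $N$ abelian normal and $H$ already achiral (dihedral groups, most small Frobenius groups, most metacyclic groups of this size); for these Theorem \ref{abelian normal subgroup splits} applies once one checks that every element is automorphic to an element of $N\cup H$. Whatever remains after Lemma \ref{autmorphic to inverse} and Theorem \ref{abelian normal subgroup splits} I would attack with Corollary \ref{G^kG'}, which reduces achirality to verifying, for each divisor $k$ of the exponent, that every element of $G^kG'\setminus G^k$ is homomorphic to its inverse; this is a single finite pass per group. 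The finitely many groups that survive all three reductions I would hand to the algorithm of Section \ref{comp-of-chi}, run on a minimal generating set as justified by Lemma \ref{locality of chirality}, using a computer algebra system.

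The main obstacle is computational. The paper emphasizes that $|FV(G)|$ can be astronomical even for very small $G$ (e.g.\ $|FV(\text{SmallGroup}(20,3))|>10^{14}$), so brute enumeration of $W(G)$ is infeasible for a nontrivial fraction of the target range. The real work is therefore to match each nonabelian $G$ with the cheapest available reduction from Section \ref{prop-of-chi}, so that $FV(G)$ never has to be constructed in full. A secondary obstacle is bookkeeping: one must certify complete coverage of every SmallGroup of order less than $108$ and verify that each falls into exactly one of the cases above, with SmallGroup$(63,1)$ and SmallGroup$(80,3)$ as the only residual chiral examples.
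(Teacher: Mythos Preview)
Your overall architecture matches the paper's: Lemma~\ref{family} for the two chiral examples, then structural filters from Section~\ref{prop-of-chi} to kill off almost everything else, with computation reserved for stragglers. The parameters you give for $(63,1)$ and $(80,3)$ are exactly right.

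The differences are in the filtering and, more importantly, in the endgame. The paper applies Lemma~\ref{autmorphic to inverse} not just to abelian groups but to \emph{every} group of order $<108$: checking ``some element is not automorphic to its inverse'' is a cheap Magma test, and only $44$ groups survive it. Theorem~\ref{abelian normal subgroup splits} is never invoked. Corollary~\ref{G^kG'} then disposes of all but three of those $44$, leaving precisely $(63,1)$, $(80,3)$, and $(81,10)$.

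The substantive gap in your plan is the last step. You propose to feed the survivors to Neumann's algorithm, while correctly flagging that $|FV(G)|$ may be prohibitive. The paper never runs Neumann's algorithm here. For the single genuine holdout, SmallGroup$(81,10)$ (a $2$-generated $3$-group of class~$3$), the paper instead works one level up: it searches, via Mal'cev coordinates, the free nilpotent group of rank~$2$, class~$3$, exponent~$9$, and verifies \emph{that} group is achiral. Since $(81,10)$ is a quotient of it, achirality descends. This sidesteps building $FV(G)$ entirely. Your proposal lacks both the identification of $(81,10)$ as the unique hard case and this free-nilpotent-quotient trick for dispatching it; without that, your fallback is exactly the computation you yourself flag as likely infeasible.
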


\begin{proof}
Recall that any chiral group must have an element $x\in G$ such that $\sigma(x)\neq x^{-1}$ for all automorphisms $\sigma$ of $G$. There are only 44 groups with this property of order less than 108. Of those 44 groups, only SmallGroup $(63,1)$, SmallGroup$(80,3)$, and SmallGroup$(81,10)$ are not shown to be achiral by Corollary \ref{G^kG'} as tested using Magma \cite{Magma}. From above we know that SmallGroup$(63,1)$ and smallGroup$(80,3)$ are chiral. 

To show that SmallGroup$(81,10)$ is achiral, we performed a search in Magma over the Mal'cev coordinates of the free nilpotent group of exponent 9 and class 3 to show that the free nilpotent group on two generators of exponent 9 and class 3 is achiral. Since SmallGroup$(81,10)$ is a quotient of the free nilpotnet group on two generators of exponent 9 and class 3, we conclude that SmallGroup$(81,10)$ is achiral.  
\end{proof}

\section{Nilpotent Groups}\label{nilp}

It is clear that every abelian group is achiral. However, in this section we will see that there are chiral nilpotent groups.

\begin{lemma}\label{homomorphic lemma}
A reduced free group $G$ is achiral if and only if every element is homomorphic to its inverse.
\end{lemma}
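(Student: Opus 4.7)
The forward implication is immediate from Lemma \ref{autmorphic to inverse}: if every element is homomorphic to its inverse, then no element can witness chirality of any $(G,w)$, so $G$ is achiral. The content lies in the converse, and the plan is to exploit the defining universal property of a reduced free group: any assignment of its standard generators to elements of $G$ extends to an endomorphism of $G$.

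For the converse, I would fix $G$ to be a reduced free group of rank $d$ on generators $a_1,\ldots,a_d$, assume $G$ is achiral, and take an arbitrary $g \in G$. Since $G$ is $d$-generated, I can write $g = w(a_1,\ldots,a_d)$ for some word $w$ on $d$ variables. Thus $g \in G_w$, and achirality of $G$ forces $g^{-1} \in G_w$ as well. Hence there exist elements $h_1,\ldots,h_d \in G$ with $w(h_1,\ldots,h_d) = g^{-1}$. I would then invoke the universal property: the assignment $a_i \mapsto h_i$ extends to an endomorphism $\phi \colon G \to G$, because $G$ itself belongs to the variety it generates, so every relation (law) among the $a_i$ continues to hold under $a_i \mapsto h_i$. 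Applying $\phi$ to $g = w(a_1,\ldots,a_d)$ gives $\phi(g) = w(h_1,\ldots,h_d) = g^{-1}$, as desired.

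There is really no obstacle beyond being careful about what ``reduced free'' buys us: the crucial point is precisely that $G \cong FV(G)$, identified in the previous section with $W(G)$, so that any tuple in $G^d$ defines an endomorphism via $a_i \mapsto h_i$. In a general group this extension step would fail, which is exactly why the statement is phrased for reduced free groups. The rest is just unwinding the definition of $G_w$ and of chirality.
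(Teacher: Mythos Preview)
Your argument is correct and matches the paper's proof essentially line for line: write $g$ as a word in a fixed generating set, use achirality to get $g^{-1}$ in the same word image, and then invoke the universal property of reduced free groups to extend the substitution to an endomorphism sending $g$ to $g^{-1}$. One minor quibble: you have the labels ``forward'' and ``converse'' swapped relative to the usual reading of the biconditional, but the mathematics is unaffected.
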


\begin{proof}
Suppose first every element in $G$ is homomorphic to its inverse. Then if $g \in \Ima(w)$ for some word map $w$ and $g\in G$, we have $w(\overline{a}) = g$ for some $\overline{a}\in G$ and the homomorphism $\phi$ sending $g$ to $g^{-1}$ gives $w(\phi(\overline{a})) = g^{-1}$, so $g^{-1}\in \Ima(w)$. Thus $G$ is achiral.

Now suppose $G$ is achiral and let $g \in G$. Fix a generating set $S$ of $G$ and write $g$ as a word $w$ in $S$. Considering $w$ as a word map, we see $g \in \Ima(w)$ by evaluating on $S$. By achirality of $G$, we have $g^{-1}\in \Ima(w)$, say by evaluating on $T$. Consider the map that maps elements of $S$ to corresponding elements of $T$. Since $G$ is a reduced free group, this map can be extended to an homomorphism on $G$, and it maps $g$ to $g^{-1}$.
\end{proof}

\begin{theorem}
The class 2 rank 3 free nilpotent group $N = N_{2,3}$ is achiral. As a result, every class 2 rank 3 nilpotent group is achiral.
\end{theorem}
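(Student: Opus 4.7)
The plan is to invoke Lemma \ref{homomorphic lemma}: since $N := N_{2,3}$ is a reduced free group, it is achiral if and only if every $g \in N$ is homomorphic to its inverse. The ``as a result'' clause will follow from the earlier proposition that achirality is inherited by quotients, since every class 2 nilpotent group of rank at most 3 is a quotient of $N$.

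To show each $g \in N$ is homomorphic to $g^{-1}$, write $g$ in Mal'cev normal form
\[
g = x_1^{a_1} x_2^{a_2} x_3^{a_3}[x_1,x_2]^{b_{12}}[x_1,x_3]^{b_{13}}[x_2,x_3]^{b_{23}}.
\]
Since $\mathrm{Aut}(N)$ surjects onto $GL_3(\mathbb{Z})$ via the action on $N/N' \cong \mathbb{Z}^3$, and being homomorphic to one's inverse is invariant under conjugation by automorphisms, I may assume via a unimodular change of basis that $(a_1, a_2, a_3) = (d, 0, 0)$ with $d := \gcd(a_1,a_2,a_3) \geq 0$. When $d = 0$, $g \in N' \cong \Lambda^2 \mathbb{Z}^3$, and the induced $\mathrm{Aut}(N)$-action on $N'$ is through $\Lambda^2$; under the Hodge-star identification $\Lambda^2 \mathbb{Z}^3 \cong (\mathbb{Z}^3)^*$ every element is $GL_3(\mathbb{Z})$-equivalent to $[x_2, x_3]^e$ for some $e \geq 0$, which is inverted by the endomorphism fixing $x_1, x_2$ and sending $x_3 \mapsto x_3^{-1}$.

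The case $d > 0$ is the main obstacle. Here I would consider an endomorphism of the form $\phi(x_1) = x_1^{-1} w$, $\phi(x_2) = y_2$, $\phi(x_3) = y_3$ with $w \in N'$ and $y_2, y_3 \in N$ to be determined. By centrality of $N'$, $\phi(x_1^d) = x_1^{-d} w^d$, and since commutators depend only on the abelianizations $\bar y_i \in \mathbb{Z}^3$, the equation $\phi(g) = g^{-1}$ expands into three integer equations in the three coordinates of $w$ and the six coordinates of $\bar y_2, \bar y_3$. Writing $\bar y_2 = (\alpha_2, \beta_2, \gamma_2)$ and $\bar y_3 = (\alpha_3, \beta_3, \gamma_3)$ and imposing $\beta_2 \gamma_3 - \beta_3 \gamma_2 = -1$ makes the $[x_2,x_3]$-equation automatic, and the remaining two become a pair of linear congruences modulo $d$ in the parameters $\alpha_2, \alpha_3$ and the coordinates of $w$. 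Their solvability, after reducing modulo $\gcd(d, b_{23})$ to the case of a vector primitive mod $k$ and invoking surjectivity of $GL_2(\mathbb{Z}) \twoheadrightarrow GL_2(\mathbb{Z}/k)$ for each $k$, can be established. The principal technical work will be in the careful orchestration of these choices to secure integer solutions in every arithmetic case, and I anticipate the argument splitting along the divisibility $\gcd(d, b_{23}) \mid 2b_{12}$, etc.
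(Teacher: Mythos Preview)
Your overall strategy---invoke Lemma~\ref{homomorphic lemma}, use the $GL_3(\mathbb{Z})$-action to reduce to $g = x_1^{d}\,[x_1,x_2]^{b_{12}}[x_1,x_3]^{b_{13}}[x_2,x_3]^{b_{23}}$, then build an explicit endomorphism sending $g$ to $g^{-1}$---is exactly the paper's approach. The difference is that you have made the endomorphism ansatz unnecessarily general, and this is what is generating the anticipated ``divisibility cases'' that you have not actually carried out.

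The paper simply takes $w=1$ and $\alpha_2=\alpha_3=0$, i.e.
\[
\phi(x_1)=x_1^{-1},\qquad \phi(x_2)=x_2^{\beta_2}x_3^{\gamma_2},\qquad \phi(x_3)=x_2^{\beta_3}x_3^{\gamma_3}.
\]
With this choice $\phi(x_1^{d})=x_1^{-d}$ on the nose, so the exponent $d$ disappears from the problem entirely---there are no congruences modulo $d$ to solve and no case split on $d=0$ versus $d>0$. The three commutator equations become
\[
\beta_2 b_{12}+\beta_3 b_{13}=b_{12},\qquad
\gamma_2 b_{12}+\gamma_3 b_{13}=b_{13},\qquad
(\beta_2\gamma_3-\beta_3\gamma_2)\,b_{23}=-b_{23},
\]
i.e.\ one needs an integer matrix $M=\begin{pmatrix}\beta_2&\beta_3\\ \gamma_2&\gamma_3\end{pmatrix}$ with $\det M=-1$ and $(b_{12},b_{13})^{T}$ as a $1$-eigenvector. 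Such an $M$ always exists: conjugate $\mathrm{diag}(1,-1)$ by any $P\in GL_2(\mathbb{Z})$ whose first column is a primitive multiple of $(b_{12},b_{13})^{T}$ (and take $M=\mathrm{diag}(1,-1)$ if $b_{12}=b_{13}=0$). That is the entire argument.

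So your proposal is on the right track but, as written, is incomplete: you stop at ``I anticipate the argument splitting along the divisibility \ldots'' precisely where the paper's simpler ansatz finishes in one line. The extra parameters $w,\alpha_2,\alpha_3$ buy you nothing; drop them.
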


\begin{proof}
Write the generators of $N$ to be $a,b,c$ and the commutators to be $d=[a,b]$, $e=[a,c]$, $f=[b,c]$. Since every element in $N$ is automorphic to some element of the form $a^*d^*e^*f^*$, it suffices to show that elements of this form is homomorphic to its inverse.

Fix $g = a^id^je^kf^l \in N$. Consider the homomorphism $\phi$ with $\phi(a) = a^{-1}$, $\phi(b) = b^xc^y$, and $\phi(c) = b^zc^w$. We have $\phi(d) = d^{-x}e^{-y}$, $\phi(e) = d^{-z}e^{-w}$, and $\phi(f) = f^{xw-zy}$. Thus, to have $\phi(g) = g^{-1}$, we need $$a^{-i}d^{-j}e^{-k}f^{-l} = (a^{-1})^i(d^{-x}e^{-y})^j(d^{-z}e^{-w})^k(f^{xw-zy})^l,$$ which is equivalent to the following system of equations:
$$\begin{cases}
xj+zk = j\\
yj+wk = k\\
xw-zy = -1.
\end{cases}$$
However, this is again equivalent to finding an integer matrix $M = \left(\begin{matrix}
x & z\\
y & w
\end{matrix}\right)$ such that its determinant is -1 and the vector $\left(\begin{matrix}j\\k\end{matrix}\right)$ is its eigenvector with eigenvalue 1. This matrix can be found by starting with the matrix $\left(\begin{matrix}1&0\\0&-1\end{matrix}\right)$ and do a change of bases such that $(\gcd(j,k),0)$ gets mapped to $(j,k)$. Thus, we see that every $g\in N$ is homomorphic to its inverse, and thus $N$ is achiral by the previous lemma.

\end{proof}

\begin{theorem}
Let $N_{3,2} = \langle a,b \rangle$ be the free nilpotent group of class 3, rank 2, and let $c = [a,b]$, $d = [c,a]$, and $e = [c,b]$ be the standard Mal'cev basis of $N_{3,2}$. Then for any odd prime $p$, the element $g = a^{p^2}c^pd$ is not homomorphic to its inverse. Thus, $N_{3,2}$ is chiral.
\end{theorem}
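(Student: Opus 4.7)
The plan is to invoke Lemma \ref{homomorphic lemma}: since $N_{3,2}$ is reduced free, chirality follows once we exhibit an element that is not homomorphic to its inverse, so it suffices to show that no endomorphism $\phi$ of $N_{3,2}$ sends $g$ to $g^{-1}$. Write $\Gamma_2 = [N_{3,2}, N_{3,2}] = \langle c, d, e\rangle$ and $\Gamma_3 = \langle d, e\rangle$; since $N_{3,2}$ has class $3$, $\Gamma_3$ is central and $\Gamma_2$ is abelian.

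First I would put $g^{-1}$ into Mal'cev form. From $ca = acd$ one derives inductively $c^k a^i = a^i c^k d^{ki}$, giving
\[
g^{-1} = d^{-1} c^{-p} a^{-p^2} = a^{-p^2} c^{-p} d^{p^3 - 1}.
\]
Next, passing to the abelianization $N_{3,2}^{\mathrm{ab}} \cong \mathbb{Z}^2$, the requirement $\phi(g) = g^{-1}$ forces $\phi(a) \equiv a^{-1} \pmod{\Gamma_2}$, so I may write
\[
\phi(a) = a^{-1} c^{r} z_1, \qquad \phi(b) = a^{s} b^{t} c^{u} z_2
\]
for integers $r,s,t,u$ and $z_1, z_2 \in \Gamma_3$. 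Since $\Gamma_3$ is central, $z_1, z_2$ play no role in $\phi(c) = [\phi(a), \phi(b)]$ or $\phi(d) = [\phi(c), \phi(a)]$. Using that the commutator is bilinear in $N_{3,2}$ whenever one argument lies in $\Gamma_2$ (because $[\Gamma_2, \Gamma_2] = 1$), a short computation yields
\[
\phi(c) \equiv c^{-t} \pmod{\Gamma_3}, \qquad \phi(d) = d^{t}.
\]

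The heart of the argument is the observation that every Mal'cev exponent of $\phi(a)^{p^2}$ and of $\phi(c)^p$ is divisible by $p$. For $\phi(c)^p$ this is immediate because $\phi(c) \in \langle c, d, e\rangle$ is abelian. For $\phi(a)^{p^2}$, I would apply the class-3 identity $(xy)^n = x^n y^n [y, x]^{\binom{n}{2}}$ (valid for $y \in \Gamma_2$) with $x = a^{-1}$ and $y = c^{r}$, then note that $\binom{p^2}{2} = p^2(p^2-1)/2$ is divisible by $p^2$ when $p$ is odd. Consequently the $d$-exponent of $\phi(a)^{p^2}\phi(c)^p$ is divisible by $p$, and multiplying by $\phi(d) = d^{t}$ shows the $d$-exponent of $\phi(g)$ is congruent to $t$ modulo $p$.

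Finally, matching $\phi(g)$ against $g^{-1}$ coordinate-by-coordinate: the $c$-coordinate gives $rp^2 - pt = -p$, whence $t \equiv 1 \pmod p$, while the $d$-coordinate gives $t \equiv p^3 - 1 \equiv -1 \pmod p$. These together force $2 \equiv 0 \pmod p$, contradicting the fact that $p$ is odd. The main obstacle will be the bookkeeping in the commutator and power expansions, but the class-3 bilinearity in $\Gamma_2$ and the centrality of $\Gamma_3$ keep everything tractable, collapsing the whole argument onto a single congruence modulo $p$.
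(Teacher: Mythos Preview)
Your argument is correct and follows essentially the same route as the paper's own proof: constrain $\phi(a)$ via the abelianization, compute $\phi(c)\equiv c^{-t}\pmod{\Gamma_3}$ and $\phi(d)=d^{t}$, then compare the $c$- and $d$-coordinates of $\phi(g)$ with those of $g^{-1}=a^{-p^2}c^{-p}d^{p^3-1}$ to force $t\equiv 1$ and $t\equiv -1\pmod p$. Your write-up is in fact slightly more careful than the paper's in two places---you explicitly invoke the class-$3$ power identity $(xy)^n=x^ny^n[y,x]^{\binom{n}{2}}$ to see that the $d$-exponent of $\phi(a)^{p^2}$ is divisible by $p$ (using that $p$ is odd so that $p\mid\binom{p^2}{2}$), and you work with an arbitrary endomorphism rather than an automorphism, which is what Lemma~\ref{homomorphic lemma} actually requires.
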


\begin{proof}
Suppose $\phi$ is an automorphism such that $\phi(g) = g^{-1} = a^{-p^2}c^{-p}d^{p^3-1}$. For simplicity, we will use $*$ to denote unknown (possibly different for different $*$'s) integers, and $n*$ to denote integers divisible by $n$. By considering the power of $a$ in $\phi(g)$, we see $\phi(a)$ must have the form $a^{-1}c^*d^*e^*$. Suppose $\phi(b) = a^*b^xc^*d^*e^*$. Thus, $\phi(c) = c^{-x}d^*e^*$ and $\phi(d) = d^{x}$.

We then compute 
\begin{align*}
\phi(g) &= (a^{-1}c^*d^*e^*)^{p^2}(c^{-x}d^*e^*)^p(d^{x}) \\
 &= (a^{-p^2}c^{p^2*}d^{-\frac{p^2(p^2-1)}{2}*+p^2*}e^{p^2*})(c^{-px}d^{p*}e^{p*})(d^x)\\
 &= (a^{-p^2}c^{p^2*-px}d^{p*+x}e^{p*})
\end{align*}

By considering the exponent of $c$ modulo $p^2$, we see $-px \equiv -p$ modulo $p^2$, so $x \equiv 1$ modulo $p$. However, considering the exponent of $d$ modulo $p$, we get $x \equiv -1$ modulo $p$, a contradiction. Thus the theorem follows.
\end{proof}

\begin{corollary}
If $G$ is a finite nilpotent group of class 3 and rank 2, then $G$ is chiral.
\end{corollary}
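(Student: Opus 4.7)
My plan is to reduce to the case of a finite $p$-group via the Sylow decomposition and then apply the preceding theorem. Since $G$ is finite nilpotent, I would write $G = \prod_p G_p$ as the direct product of its Sylow $p$-subgroups. The class of $G$ equals the maximum of the classes of its Sylow factors, so some $G_p$ has class $3$; since a group of class at least $2$ cannot be cyclic and the rank of each Sylow subgroup is bounded by the rank of $G$, this $G_p$ has rank exactly $2$. Moreover, chirality passes up from a direct factor: if $(G_p, w)$ is chiral witnessed by $h$, then $(h, 1)$ lies in $\Ima(w)|_G$ (using that the identity is always in the image of any word map, by evaluating at the all-identity tuple), while $(h^{-1}, 1) \notin \Ima(w)|_G$; hence $(h, 1)$ witnesses chirality of $G$ via the same $w$. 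Thus it suffices to prove that every finite $p$-group of class $3$ and rank $2$ is chiral.

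For odd $p$, I would apply the witnessing pair from the preceding theorem directly to $G_p$. Let $a, b$ generate $G_p$ and set $c = [a, b]$, $d = [c, a]$, $e = [c, b]$; take $w(x, y) = x^{p^2}[x, y]^p [[x, y], x]$ and $g = w(a, b) = a^{p^2} c^p d$. Suppose for contradiction that $g^{-1} = w(a', b')$ for some $a', b' \in G_p$. By the universal property of the free nilpotent group $N_{3,2}$, the assignment $a \mapsto a'$, $b \mapsto b'$ extends to a homomorphism $N_{3,2} \to G_p$, and this lets me rerun the theorem's Mal'cev-basis computation inside $G_p$, with congruences now taken modulo the orders of $a, b, c, d, e$ in $G_p$. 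The $c$-exponent analysis (read modulo $p^2$) should again yield a congruence of the form $x \equiv 1 \pmod p$ on the $a$-component of the image of $a$, while the $d$-exponent analysis (read modulo $p$) should force $x \equiv -1 \pmod p$, contradicting each other for odd $p$.

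The main obstacle will be verifying that this modular arithmetic really yields a contradiction rather than collapsing. The extraction of the two incompatible congruences requires $|c| \geq p^2$ and $|d| \geq p$ in $G_p$ so that the mod-$p$ information in the $c$- and $d$-coordinates is preserved; in degenerate cases (for example when $d$ is trivial in $G_p$) one would have to substitute a different witness element such as $a^{p^2} c^p e$ with a correspondingly modified word, breaking into subcases by which of $d, e$ are nontrivial. The case $p = 2$ requires a genuinely separate treatment, since the contradiction $1 \not\equiv -1 \pmod p$ becomes vacuous; constructing an appropriate witnessing word for 2-groups of class $3$ and rank $2$ will be the technical crux in that case.
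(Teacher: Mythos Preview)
The paper offers no proof of this corollary, and in fact the statement is false as written. By the paper's own Theorem~A, every group of order $81$ is achiral; among these are $2$-generated $3$-groups of nilpotency class exactly $3$ (for instance $C_3\wr C_3$). More directly, in the course of proving Theorem~A the authors verify by computer that the free $2$-generator nilpotent group of class $3$ and exponent $9$ is achiral, and that group itself has class exactly $3$ (it surjects onto $C_3\wr C_3$). So there exist finite $2$-generated nilpotent groups of class $3$ that are not chiral. The Remark immediately following the corollary quietly supplies the missing hypotheses---divisibility conditions on the orders of the Mal'cev basis elements---and the corollary should really be read only under those extra assumptions.

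Your plan is therefore aimed at a statement that cannot be proved in the stated generality, but the obstacles you isolate are exactly the right ones. Rerunning the theorem's computation inside a finite quotient $G_p$ requires that the congruence information in the $c$- and $d$-coordinates survive, which forces lower bounds on the orders of those elements; when the bounds fail (as in the exponent-$9$ example above with $p=3$) the contradiction genuinely evaporates rather than merely requiring a different witness. Your concern about $p=2$ is likewise legitimate and is not addressed anywhere in the paper. In short, the ``degenerate cases'' you flag are not patchable side issues but the actual boundary of validity of the method, matching the conditions the authors themselves impose in the Remark.
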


\begin{remark}
The previous argument and hence chirality still holds for (finite) quotients of the free nilpotent group with the order of $a$ being infinity or divisible by $p^3$, order of $b$ being infinity or divisible by $p^2$, and order of $c$ being infinity or divisible by $p$.
\end{remark}

\bibliographystyle{amsalpha}
\bibliography{sample}

\end{document}